\newtheorem{theorem}{Theorem}
\newtheorem{definition}[theorem]{Definition}
\newtheorem{corollary}[theorem]{Corollary}
\newtheorem{assumption}[theorem]{Assumption}
\title{A note on large deviations in life insurance}
\author{Stefan Gerhold\thanks{Financial support from the Austrian Science Fund (FWF) under grant P~30750 is gratefully acknowledged.}\\
TU Wien \\
1040 Vienna, Austria\\
\tt{sgerhold@fam.tuwien.ac.at} 
	}
\date{\today}
\numberwithin{equation}{section}
\numberwithin{theorem}{section}
\begin{document}

\maketitle

\begin{abstract}
We study large and moderate deviations for a life insurance portfolio, without assuming identically distributed losses. The crucial assumption is that losses are bounded, and that variances are bounded below. From a standard large deviations upper bound, we get an exponential bound for the probability of the average loss exceeding a threshold. A counterexample shows that a full large deviation principle does not follow from our assumptions.
\end{abstract}

MSC classes: 60F10, 91B30

\section{Introduction and main assumption}

Let $L_k$ be the loss of  the $k$th contract of a life insurance portfolio,
that is, a random variable
which aggregates the discounted remaining cash flows. With $X_k=L_k - \mathbb{E}[L_k],$ define
\begin{equation}\label{eq:Mn}
  M_n = \frac1n \sum_{k=1}^n X_k,
\end{equation}
the average centered loss of the portfolio.
 We are interested in estimates for
\begin{equation}\label{eq:prob}
  \mathbb{P}[ M_n \geq x], \quad x>0\ \text{fixed,}\ n\to\infty,
\end{equation}
the probability of an event that is typically calamitous for the insurer.
Somewhat surprisingly, conditions on $(X_k)_{k\in\mathbb N}$ that yield such estimates
have apparently not been made explicit in the literature. Note that the well-known
Cram\'er's theorem does not settle this problem, because the assumption
of identically distributed losses is not suitable for life insurance.
Also, it is not immediately clear under which assumptions on the~$X_k$
the G\"artner-Ellis theorem can be applied. Still, we will see that it is not very hard to obtain
bounds and estimates for~\eqref{eq:prob} from known large and moderate deviation
results, and so much of this note has a didactic character,
except possibly Theorem~\ref{thm:counter}.
When justifying the assumptions we make below, we focus on life insurance, but our
observations may also apply to other risk aggregation problems.
Recall the following standard definition:
\begin{definition}
  A sequence of random variables~$(Z_n)_{n\in\mathbb N}$ satisfies the LDP (large deviation principle)
  with good rate function~$I$ and speed~$s(n)$, if
  \begin{itemize}
    \item[(i)] $I:\mathbb R \to [0,\infty]$ is not infinite everywhere,
    and the level sets $\{x: I(x)\leq c\},$ $c\in[0,\infty),$ are compact. In particular, $I$ is lower semi-continuous.
    \item[(ii)]  $s(n)>0$ satisfies $\lim_{n\to\infty}s(n)=\infty.$
    \item[(iii)] For any Borel set~$G,$
    \begin{align*}
      -I(\mathrm{int}(G)) &\leq \liminf_{n\to\infty} \frac{1}{s(n)}
        \log \mathbb{}P[Z_n \in G] \\
        &\leq
        \limsup_{n\to\infty} \frac{1}{s(n)}
        \log \mathbb{}P[Z_n \in G] \leq -I(\mathrm{cl}(G)),
    \end{align*}
    where $I(A):=\inf_{x\in A}I(x)$ for any $A\subseteq \mathbb R.$
  \end{itemize}
\end{definition}
One of the central results in LD theory is Cram\'er's theorem (Theorem~2.2.3
in~\cite{DeZe98}), which asserts
that the sequence of empirical means $M_n$ satisfies an LDP under the assumption that the $X_k$ are iid.
However, in life insurance the loss distributions depend significantly on several parameters including
amount insured, age, and time to expiry, which contradicts the assumption
of identical distributions. We now state a different set of conditions, argue
why they seem reasonable, and subsequently
explore the estimates they imply.
\begin{assumption}\label{ass:main}
  \begin{itemize}
    \item[(i)] $(X_k)_{k\in\mathbb N}$ is a sequence of independent centered real random variables,
    \item[(ii)] there is $c_0>0$ such that $|X_k|\leq c_0$ for all~$k$,
    \item[(iii)] there is $c_1>0$ such that $\mathrm{Var}[X_k]=\mathbb{E}[X_k^2]\geq c_1$ for all~$k$.
  \end{itemize}
\end{assumption}
In the life insurance application described above,
the independence assumption ignores certain risks, such as epidemics and natural disasters,
but still seems reasonable for large portfolios. Part~(ii) is usually satisfied in practice, as insurers
prescribe an upper limit on the possible amount insured. As for~(iii), note
that clearly we may assume $\mathrm{Var}[X_k]>0,$ because it makes no sense to include
contracts with no remaining random cash flows. Then, since there is usually a lowest possible
amount insured, and there are only finitely many value combinations for the parameters age, time to expiry,
sex, and type of insurance, a uniform lower bound on the loss variance is natural.
Of course, for continuous-time models, which are not widespread
in practice anyways, this applies only after time discretization.

There is a large literature on large deviations for compound sums and
more sophisticated models in risk theory, but apparently not for the individual
risk model with non-identical distributions. In practice, premia and reserves are calculated
for each contract separately, i.e.\ using an individual model.
For computing the distribution of~\eqref{eq:Mn}
numerically, e.g.\ to compute value at risk,
 a standard approach is to pass to a collective model. For an asymptotic
approximation of~\eqref{eq:prob}, which is our goal, such a change of model is not required.
Large deviations for an individual model of credit and insurance risk are also
studied in~\cite{DeDeDu04}, but their assumptions are different from ours.

The rest of this note is structured as follows.
From a practical viewpoint, our
main result is Theorem~\ref{thm:bd}, which shows that Assumption~\ref{ass:main}
yields an exponentially small upper bound, which is weaker than a full LDP,
but should suffice for practical purposes. 
In Theorem~\ref{thm:counter}, we show that Assumption~\ref{ass:main}
does not suffice to establish an LDP for~$M_n$. Theorem~\ref{thm:ge} adds a somewhat
restrictive assumption, which implies an LDP. Finally, Corollary~\ref{cor:md} establishes
 moderate deviation estimates for~$M_n.$

\section{Large deviations: an upper bound}

For practical purposes, an upper bound for~\eqref{eq:prob}
is much more important than a lower bound.
 We now show
that the -- rather weak -- Assumption~\ref{ass:main} implies
an exponential upper estimate.

\begin{theorem}\label{thm:bd}
   Under Assumption~\ref{ass:main}, there exists a positive function
   $J:(0,\infty) \to (0,\infty)$ such that
   \begin{equation}\label{eq:ld upper}
     \limsup_{n\to\infty}\frac1n \log
     \mathbb{P}[M_n\geq x] \leq
     -J(x), \quad x>0.
   \end{equation}
\end{theorem}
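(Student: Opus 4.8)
The plan is to obtain the bound from the standard large deviations upper bound, which in turn follows from a uniform bound on the logarithmic moment generating functions. Concretely, set $\Lambda_k(\lambda) = \log \mathbb{E}[e^{\lambda X_k}]$ and, for the average, $\Lambda_n^{(\text{avg})}(\lambda) = \frac1n\sum_{k=1}^n \Lambda_k(n\lambda)$, so that $\mathbb{E}[e^{n\lambda M_n}] = \prod_{k=1}^n \mathbb{E}[e^{\lambda X_k}]$. By the exponential Chebyshev (Markov) inequality, for every $\lambda>0$,
\begin{equation*}
  \frac1n \log \mathbb{P}[M_n \geq x] \leq -\lambda x + \frac1n\sum_{k=1}^n \Lambda_k(\lambda).
\end{equation*}
So it suffices to produce, for each fixed $x>0$, a single $\lambda = \lambda(x) > 0$ and a constant $\delta(x) > 0$ such that $\Lambda_k(\lambda) \leq \lambda x - \delta(x)$ for all $k$; then $J(x) := \delta(x)$ works.

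The key step is therefore a uniform upper bound on $\Lambda_k(\lambda)$ for small $\lambda$. Here I would use the boundedness from Assumption~\ref{ass:main}(ii): since $|X_k|\leq c_0$ and $\mathbb{E}[X_k]=0$, a Taylor expansion of $e^{\lambda x}$ with the Lagrange remainder, or a Bennett/Bernstein-type estimate, gives
\begin{equation*}
  \mathbb{E}[e^{\lambda X_k}] \leq 1 + \tfrac12 \lambda^2 \mathbb{E}[X_k^2]\, e^{\lambda c_0}
  \leq \exp\!\Bigl(\tfrac12 \lambda^2 \sigma^2 e^{\lambda c_0}\Bigr),
\end{equation*}
where $\sigma^2 := \sup_k \mathbb{E}[X_k^2] \leq c_0^2$ is finite, again by~(ii). (Curiously, Assumption~\ref{ass:main}(iii), the variance lower bound, is not needed for the upper bound in Theorem~\ref{thm:bd}; it will matter for the matching lower bound and the LDP in the later results.) Hence $\Lambda_k(\lambda) \leq \tfrac12 \lambda^2 \sigma^2 e^{\lambda c_0}$ uniformly in $k$, and plugging this in yields
\begin{equation*}
  \frac1n \log \mathbb{P}[M_n \geq x] \leq -\lambda x + \tfrac12 \lambda^2 \sigma^2 e^{\lambda c_0}
\end{equation*}
for all $\lambda > 0$ and all $n$.

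It remains to check that the right-hand side is strictly negative for a suitable choice of $\lambda$. For $x>0$ fixed, as $\lambda \downarrow 0$ the linear term $-\lambda x$ dominates the $O(\lambda^2)$ term, so choosing $\lambda$ small enough (e.g.\ $\lambda$ such that $\tfrac12 \lambda \sigma^2 e^{\lambda c_0} < x$) makes the bracket negative; one may then define
\begin{equation*}
  J(x) := \sup_{\lambda > 0}\Bigl(\lambda x - \tfrac12 \lambda^2 \sigma^2 e^{\lambda c_0}\Bigr) > 0,
\end{equation*}
or simply take the value at one admissible $\lambda$. Taking $\limsup_{n\to\infty}$ (the bound is in fact uniform in $n$) gives~\eqref{eq:ld upper}. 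The only mild subtlety — hardly an obstacle — is to present the moment generating function estimate cleanly; everything else is a one-line optimization. An alternative route, which I would mention, is to invoke a known non-i.i.d.\ large deviations upper bound directly (e.g.\ via the subadditivity/Fenchel--Legendre machinery for triangular arrays, or Bernstein's inequality applied to $\sum_k X_k$), but the self-contained Chernoff argument above is shortest and makes the role of the hypotheses transparent.
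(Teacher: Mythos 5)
Your proof is correct, but it takes a different route from the paper. You argue by a direct Chernoff (exponential Chebyshev) bound: the uniform estimate $\log\mathbb{E}[e^{\lambda X_k}]\leq \tfrac12\lambda^2\sigma^2 e^{\lambda c_0}$ with $\sigma^2=\sup_k\mathbb{E}[X_k^2]\leq c_0^2$ gives $\tfrac1n\log\mathbb{P}[M_n\geq x]\leq -\lambda x+\tfrac12\lambda^2\sigma^2 e^{\lambda c_0}$ for every $\lambda>0$ and every $n$, and optimizing (or just picking one small $\lambda$) yields a strictly negative, $n$-uniform bound. The paper instead invokes the general large deviations upper bound of Theorem~4.5.20 in Dembo--Zeitouni, using exponential tightness (from $|M_n|\leq c_0$), which produces $J(x)=\inf_{y\geq x}\bar\Lambda^*(y)$ with $\bar\Lambda(\lambda)=\limsup_n \tfrac1n\sum_k\log\mathbb{E}[e^{\lambda X_k}]$; the real work there is showing $J>0$, done via the same uniform quadratic-in-$\lambda$ estimate of the log-moment generating functions near $0$ together with convexity of $\bar\Lambda^*$. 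So the key analytic ingredient (a uniform $O(\lambda^2)$ bound on $\log\mathbb{E}[e^{\lambda X_k}]$, using only boundedness) is shared; what differs is the packaging. Your argument is more elementary and self-contained, and in fact gives a non-asymptotic bound valid for every $n$; the paper's version yields the generally sharper rate $\inf_{y\geq x}\bar\Lambda^*(y)$ expressed through the Fenchel--Legendre transform, which connects naturally to the LDP machinery used later (e.g.\ the upper estimate in~\eqref{eq:mix lower}). Your side remark that Assumption~\ref{ass:main}(iii) is not needed here is accurate --- the paper's proof also uses only (i) and (ii); just note that if you insist on dropping (iii), you should define $J(x)$ by evaluating at one admissible $\lambda$ (as you mention) rather than by the supremum, since in the degenerate case $\sigma^2=0$ the supremum is $+\infty$ and $J$ is required to take values in $(0,\infty)$.
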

\begin{proof}
   We apply the general LD upper bound from Theorem~4.5.20 in~\cite{DeZe98}.
   %Generalizing~\eqref{eq: Lambda 1},
   Define
   \[
      \bar{\Lambda}(\lambda):=
       \limsup_{n\to\infty}\frac1n \log \mathbb{E}[e^{\lambda n M_n}]
       =\limsup_{n\to\infty}\frac1n \sum_{k=1}^n \log \mathbb{E}[e^{\lambda X_k}].
   \]
    Since
   $|M_n|\leq c_0$ is bounded, the sequence of its laws is exponentially
   tight (definition on p.~8 of~\cite{DeZe98}).
  Thus, part~(a) of
   Theorem~4.5.20 in~\cite{DeZe98} implies
   \[
       \limsup_{n\to\infty}\frac1n \log
     \mathbb{P}[M_n\geq x] \leq - \inf_{y\geq x}\bar{\Lambda}^*(y)
     =: -J(x),
   \]
   where
     \[
    \bar{\Lambda}^*(x) := \sup_{\lambda\in \mathbb R}\big(\lambda x-\bar{\Lambda}(\lambda)\big), \quad x\in\mathbb R,
  \]   
    is the Fenchel-Legendre transform  of~$\bar{\Lambda}$.
   The key point now is to show that~$J$ is positive, because
  otherwise~\eqref{eq:ld upper} would be of little use. Since $\bar{\Lambda}^*$
  is convex (see Theorem~4.5.3~(a) in~\cite{DeZe98}), it suffices
  to show that $\bar{\Lambda}^*$ is positive on some interval $(0,\delta)$
  with $\delta>0.$
   By Assumption~\ref{ass:main},
   \[
      \mathbb{E}[e^{\lambda X_k}] =
      1+\tfrac12 \mathbb{E}[X_k^2] \lambda^2 
    + \mathrm{O}(\lambda^3), \quad \lambda\to 0,
   \]
   where the error term is uniform w.r.t.~$k$. Hence,
   \begin{align*}
      \log \mathbb{E}[e^{\lambda X_k}]&=\tfrac12 \mathbb{E}[X_k^2] \lambda^2 
    + \mathrm{O}(\lambda^3) \\
    &\leq \tfrac12 c_0^2 \lambda^2 + \mathrm{O}(\lambda^3),
   \end{align*}
    and thus
   $\bar{\Lambda}(\lambda) \leq  c_0^2 \lambda^2$ for small~$\lambda.$
   Define the convex function
   \[
     \Theta(\lambda):= c_0\lambda^2  \vee \bar{\Lambda}(\lambda),\quad \lambda\in\mathbb R.
   \]
   Its Fenchel-Legendre transform~$\Theta^*$
   satisfies $\Theta^*(0)=0,$ is strictly convex in a neighborhood of zero,
   and $\Theta^*\leq \bar{\Lambda}^*.$
\end{proof}

\section{Large deviation principle}

We first give a counterexample (in Theorem~\ref{thm:counter}) that shows that Assumption~\ref{ass:main}
does not imply an LDP for the empirical means. In particular, this shows that
the G\"artner-Ellis theorem is not applicable here without additional assumptions,
such as Assumption~\ref{ass:ge} below.

Let $K_1\subset \mathbb{N}$ be a set of natural numbers with lower density~$0$
and upper density~$1,$ i.e.,
\[
  \nu_1(n) := \sharp \{ 1\leq k\leq n : k \in K_1\}
\]
satisfies
\[
  \liminf_{n\to\infty} \frac{\nu_1(n)}{n} = 0 \quad \text{and} \quad
  \limsup_{n\to\infty} \frac{\nu_1(n)}{n} = 1.
\]
%explicit example of such a set: siehe Link im readme
For the existence of such a set, see e.g.\ Theorem~3
in~\cite{StTo98}.
Define $K_2 := \mathbb{N}\setminus K_1$ and $\nu_2(n) := n - \nu_1(n).$

\begin{theorem}\label{thm:counter}
   Let $X^{(1)}$ be a random variable that takes the values $-1,1$ with probability~$\tfrac12$
   each, and $X^{(2)}$ analogously with values $-2,2$. Let~$(X_k)_{k\in\mathbb N}$ be a sequence
   of independent random variables  satisfying
   \[
     X_k \stackrel{\mathrm{d}}{=} X^{(i)}, \quad k\in K_i,\ i=1,2.
   \]
   This sequence satisfies Assumption~\ref{ass:main},
   and the sequence of empirical means $M_n=\frac1n \sum_{k=1}^n X_k$
   does not satisfy an LDP.
\end{theorem}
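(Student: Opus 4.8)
The plan has two parts: a one-line check that the sequence satisfies Assumption~\ref{ass:main}, and the real argument, which exploits the oscillation of the densities $\nu_i(n)/n$ to preclude an LDP at speed~$n$.

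\emph{Verification and notation.} Assumption~\ref{ass:main} holds with $c_0=2$ and $c_1=1$: independence is assumed, $X^{(1)}$ and $X^{(2)}$ are symmetric and hence centered, $|X_k|\le2$, and $\mathrm{Var}[X_k]\in\{1,4\}$. Write $nM_n=S_1(n)+S_2(n)$, where $S_i(n):=\sum_{1\le k\le n,\,k\in K_i}X_k$ is a sum of $\nu_i(n)$ independent copies of $X^{(i)}$. Let
\[
  I_1(x):=\sup_{\lambda\in\mathbb{R}}\bigl(\lambda x-\log\cosh\lambda\bigr),\qquad
  I_2(x):=\sup_{\lambda\in\mathbb{R}}\bigl(\lambda x-\log\cosh(2\lambda)\bigr)
\]
be the Cram\'er rate functions of $X^{(1)}$ and $X^{(2)}$; the substitution $\mu=2\lambda$ gives $I_2(x)=I_1(x/2)$. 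Recall that $I_1$ is finite, continuous and strictly increasing on $[0,1)$ with $I_1(0)=0$, so $I_2(x)=I_1(x/2)<I_1(x)$ for every $x\in(0,1)$.

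\emph{Subsequential rates.} Fix $x\in(0,1)$. As $K_1$ has upper density~$1$ and lower density~$0$, there are subsequences $(n_j)$ with $\nu_1(n_j)/n_j\to1$ and $(m_j)$ with $\nu_1(m_j)/m_j\to0$. I claim $\frac1{n_j}\log\mathbb{P}[M_{n_j}\ge x]\to-I_1(x)$ and $\frac1{m_j}\log\mathbb{P}[M_{m_j}\ge x]\to-I_2(x)$. For the upper bounds, Chernoff's inequality yields, for $\lambda\ge0$,
\[
  \frac1n\log\mathbb{P}[M_n\ge x]\le-\lambda x+\frac{\nu_1(n)}{n}\log\cosh\lambda+\frac{\nu_2(n)}{n}\log\cosh(2\lambda),
\]
and letting $n$ run through $(n_j)$, resp.\ $(m_j)$, and then optimizing over $\lambda$ gives the limsup bounds $-I_1(x)$, resp.\ $-I_2(x)$. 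For the matching liminf bounds, $S_2(n)\ge-2\nu_2(n)$ gives $\mathbb{P}[M_{n_j}\ge x]\ge\mathbb{P}[S_1(n_j)\ge n_jx+2\nu_2(n_j)]$; since $S_1(n_j)/\nu_1(n_j)$ is the empirical mean of $\nu_1(n_j)\to\infty$ iid copies of $X^{(1)}$ with threshold $(n_jx+2\nu_2(n_j))/\nu_1(n_j)\to x$, Cram\'er's theorem for this genuinely iid sequence together with the continuity of $I_1$ gives $\frac1{\nu_1(n_j)}\log\mathbb{P}[S_1(n_j)\ge n_jx+2\nu_2(n_j)]\to-I_1(x)$, and multiplying by $\nu_1(n_j)/n_j\to1$ completes the first limit; the computation along $(m_j)$ is identical with $X^{(2)}$ in place of $X^{(1)}$. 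Since $I_2(x)=I_1(x/2)<I_1(x)$, the two subsequential limits differ, so $\lim_n\frac1n\log\mathbb{P}[M_n\ge x]$ does not exist, and this holds for \emph{every} $x\in(0,1)$.

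\emph{Conclusion.} Suppose, for contradiction, that $M_n$ satisfies an LDP at speed~$n$ with good rate function~$I$. The map $\varphi(x):=\inf_{y\ge x}I(y)$ is non-decreasing, hence continuous outside a countable set~$D$; and $\varphi(x+)=\inf_{y>x}I(y)$, so for $x\notin D$ one has $\inf_{y\ge x}I(y)=\inf_{y>x}I(y)$. Applying the LDP lower bound to the open set $(x,\infty)$ and the upper bound to the closed set $[x,\infty)$,
\[
  -\inf_{y>x}I(y)\ \le\ \liminf_{n}\frac1n\log\mathbb{P}[M_n>x]\ \le\ \limsup_{n}\frac1n\log\mathbb{P}[M_n\ge x]\ \le\ -\inf_{y\ge x}I(y),
\]
and for $x\notin D$ the two outer terms coincide, forcing $\lim_n\frac1n\log\mathbb{P}[M_n\ge x]$ to exist; since $(0,1)\setminus D$ is uncountable, this contradicts the previous paragraph, so no LDP holds. (That $n$ is the pertinent speed follows from Theorem~\ref{thm:bd} and the bound $\mathbb{P}[M_n\ge x]\ge2^{-n}$, valid for $x\in(0,1)$ because $M_n\ge1$ when all $X_k$ are positive; a short remark to this effect should accompany the proof.) The only genuinely computational point is the matching liminf bound above, where Cram\'er's theorem for a fixed iid sequence must be transferred to sums of a growing number $\nu_1(n_j)$ of iid variables with a slightly drifting threshold; I expect this --- routine given the monotonicity and continuity of $I_1$ --- to be the sole place where hard estimates are needed rather than soft arguments.
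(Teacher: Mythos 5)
Your verification of Assumption~\ref{ass:main} and your identification of two different subsequential exponential rates are correct and match the first half of the paper's argument: the paper likewise shows, via the section means~\eqref{eq:sec means} and a sandwich argument (where you use a Chernoff bound plus a drifting-threshold transfer of Cram\'er's theorem), that $\tfrac1n\log\mathbb{P}[M_n\ge x]$ tends to $-I^{(1)}(x)$ along a subsequence where $K_1$ has density one and to $-I^{(2)}(x)$ along one where $K_2$ has density one; your monotonicity/countable-discontinuity device then correctly excludes an LDP \emph{at speed $n$}.

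The gap is the parenthetical claim that ``$n$ is the pertinent speed.'' The paper's definition of an LDP allows an arbitrary speed $s(n)\to\infty$, and ruling out all speeds is precisely what the entire second half of the paper's proof (the analysis with $\ell_1,\ell_2$ in the appendix) is devoted to; your two cited facts (Theorem~\ref{thm:bd} and $\mathbb{P}[M_n\ge x]\ge 2^{-n}$) do not accomplish this. First, these bounds constrain $s(n)$ only if one already knows that $\inf_{y\ge x}I(y)$ is finite and strictly positive for the unknown rate function $I$, which itself has to be established. Second, even granting $cn\le s(n)\le Cn$, the ratio $s(n)/n$ may oscillate, and then the non-existence of $\lim_n\tfrac1n\log\mathbb{P}[M_n\ge x]$ does not contradict an LDP at speed $s(n)$: an oscillating speed could in principle absorb the discrepancy between the two subsequential rates. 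Excluding that possibility requires the extra structural input the paper uses: the assumed LDP forces $I(x)=\ell_1 I^{(1)}(x)=\ell_2 I^{(2)}(x)$ on the relevant intervals, with $\ell_1,\ell_2$ the limits of $n_k/s(n_k)$ and $m_k/s(m_k)$ (equations~\eqref{eq:ell1} and~\eqref{eq:rep}), and the Taylor expansions of $I^{(1)},I^{(2)}$ show these two functions are not proportional, which pushes $(\ell_1,\ell_2)$ into the degenerate cases $(0,0)$ or $(\infty,\infty)$, each of which is then excluded separately. Third, for genuinely sublinear speeds your two facts yield no contradiction at all, and none can be extracted from them: by Hoeffding's inequality and the law of large numbers, $M_n$ does satisfy a degenerate LDP (rate $0$ at the origin, $+\infty$ elsewhere) at any speed $s(n)=o(n)$, so the assertion that the speed must be $n$ cannot be justified by a remark of the kind you propose; this degenerate regime is exactly the case $(\ell_1,\ell_2)=(\infty,\infty)$ that the paper confronts in the final step of its appendix, and it is the delicate point of the whole theorem. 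In short, your argument proves ``no LDP with speed $n$,'' but under the paper's definition the theorem asserts more, and the missing piece is the non-proportionality analysis of $I^{(1)}$ and $I^{(2)}$ together with the treatment of degenerate speed ratios.
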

We defer the proof of this theorem to Appendix~\ref{se:proof}. Since the moment generating function of~$X^{(2)}$ dominates
that of~$X^{(1)}$, the upper estimate in
\begin{align}
  -I^{(1)}(x) &\leq \liminf_{n\to\infty}\frac1n \log \mathbb{P}[M_n>x] \label{eq:mix lower}\\
  &\leq \limsup_{n\to\infty}\frac1n \log \mathbb{P}[M_n>x]
  \leq -I^{(2)}(x), \quad x>0, \notag
\end{align}
can be proved by the general upper LD bound we used in the proof of Theorem~\ref{thm:bd}.
 By Cram\'er's theorem, the section means
\begin{equation}\label{eq:sec means}
    M_n^{(i)} := \frac{1}{\nu_i(n)}\sum_{\substack{k=1 \\ k \in K_i}}^n X_k,\quad i=1,2,
\end{equation}
satisfy LDPs with rate functions $I^{(1)},I^{(2)},$ explicitly given
in~\eqref{eq:rf} below. 
The lower estimate~\eqref{eq:mix lower} then easily follows from
\[
  \mathbb{P}[M_n>x] \geq \mathbb{P}\big[M_n^{(1)}>x, \ M_n^{(2)}>x\big]
  = \mathbb{P}\big[M_n^{(1)}>x\big]\mathbb{P}\big[M_n^{(2)}>x\big].
\]
Thus, we have exponential lower and upper bounds, but the highly irregular
interlacement of two distributions in Theorem~\ref{thm:counter}
 precludes
a single rate function governing both. When such behavior
is explicitly forbidden, we can actually obtain a full LDP, using the G\"artner-Ellis theorem.

\begin{assumption}\label{ass:ge}
  \begin{itemize}
     \item[(i)] There is a  partition
     \[
       \mathbb N = N_1 \cup \dots \cup N_p
     \]
     such that for all~$1\leq i\leq p$ and $k\in N_i$, the law of
     $X_k\stackrel{\mathrm{d}}{=}X^{(i)}$
     is independent of~$k$. We write~$\varphi_i$ for the corresponding
     moment generating function $\varphi_i(\lambda)=
     \mathbb{E}[\exp(\lambda X^{(i)})]$.
     \item[(ii)] For each~$i$, the limit
     \[
        d_i := \lim_{n\to\infty} \frac1n \sharp \{ 1\leq k\leq n : k\in N_i \}
     \]
     exists.
     \end{itemize}
\end{assumption}

\begin{theorem}\label{thm:ge}
  Under Assumptions~\ref{ass:main} and~\ref{ass:ge}, the sequence of empirical
  means~$(M_n)_{n\in\mathbb N}$ satisfies an LDP with good rate function
  \[
    \Lambda^*(x) := \sup_{\lambda\in \mathbb R}\big(\lambda x-\Lambda(\lambda)\big), \quad x\in\mathbb R,
  \]
  the Fenchel-Legendre transform of
   \begin{equation}\label{eq:Lambda}
       \Lambda(\lambda) := \sum_{i=1}^p d_i \log \varphi_i(\lambda).
     \end{equation}
\end{theorem}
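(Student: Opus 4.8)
The plan is to verify the hypotheses of the G\"artner-Ellis theorem (Theorem~2.3.6 in~\cite{DeZe98}) for the sequence $(M_n)_{n\in\mathbb N}$ and the speed $s(n)=n$. First I would compute the limiting logarithmic moment generating function. Writing $n_i(n):=\sharp\{1\le k\le n:k\in N_i\}$, independence gives
\[
  \frac1n\log\mathbb E\big[e^{\lambda n M_n}\big]
  = \frac1n\sum_{k=1}^n\log\mathbb E[e^{\lambda X_k}]
  = \sum_{i=1}^p \frac{n_i(n)}{n}\,\log\varphi_i(\lambda).
\]
Since each $X^{(i)}$ is bounded (Assumption~\ref{ass:main}(ii)), $\varphi_i$ is finite and continuous on all of $\mathbb R$, so by Assumption~\ref{ass:ge}(ii) the limit as $n\to\infty$ exists and equals $\Lambda(\lambda)=\sum_{i=1}^p d_i\log\varphi_i(\lambda)$, finite for every $\lambda\in\mathbb R$. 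In particular $0$ lies in the interior of the effective domain $\mathcal D_\Lambda=\mathbb R$.

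Next I would check that $\Lambda$ is essentially smooth and lower semi-continuous, which is the regularity needed to upgrade the G\"artner-Ellis upper bound (valid for closed sets) to the full LDP including the lower bound for open sets. Lower semi-continuity is immediate since $\Lambda$ is finite and differentiable (indeed real-analytic) on $\mathbb R$: each $\log\varphi_i$ is smooth because $\varphi_i$ is an entire function that is strictly positive on the reals (the underlying random variables being bounded). Essential smoothness requires $\mathcal D_\Lambda^\circ$ nonempty (true, it is $\mathbb R$), $\Lambda$ differentiable there (true), and steepness at the boundary of $\mathcal D_\Lambda$ — vacuous here since $\mathcal D_\Lambda=\mathbb R$ has empty boundary. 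Hence $\Lambda$ is a steep, essentially smooth, lower semi-continuous convex function, so G\"artner-Ellis applies and yields the LDP with rate function $\Lambda^*$, its Fenchel-Legendre transform.

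Finally I would confirm that $\Lambda^*$ is a \emph{good} rate function, i.e.\ has compact level sets. This follows either directly from the exponential tightness of the laws of $M_n$ (already observed in the proof of Theorem~\ref{thm:bd}, since $|M_n|\le c_0$), which forces $\Lambda^*$ to be infinite outside $[-c_0,c_0]$, or from the general fact that when $\mathcal D_\Lambda=\mathbb R$ the transform $\Lambda^*$ is automatically a good rate function (Lemma~2.3.9(a) in~\cite{DeZe98}). Either way, combined with convexity and lower semi-continuity of $\Lambda^*$, all three conditions in the definition of the LDP are met. The main point requiring care — though it turns out to be routine given boundedness — is checking the steepness/essential-smoothness hypothesis of G\"artner-Ellis; here it is trivial precisely because Assumption~\ref{ass:main}(ii) makes every $\varphi_i$ finite on all of $\mathbb R$, so $\Lambda$ is finite everywhere and there is no boundary behavior to control.
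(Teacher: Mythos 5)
Your proposal is correct and follows essentially the same route as the paper: compute $\Lambda$ as the limit of $\frac1n\sum_k\log\mathbb E[e^{\lambda X_k}]$ using Assumption~\ref{ass:ge}, observe that boundedness of the $X_k$ makes $\mathcal D_\Lambda=\mathbb R$ so that steepness is vacuous (the paper cites Remark~(c) on p.~45 of~\cite{DeZe98} for exactly this point), note smoothness of $\Lambda$, and invoke the G\"artner-Ellis theorem. Your extra remarks on goodness of $\Lambda^*$ are consistent with the theorem's conclusion and add nothing that conflicts with the paper's argument.
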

\begin{proof}
  This result is an easy consequence of the G\"artner-Ellis theorem (Theorem~2.3.6
  in~\cite{DeZe98}). Indeed, here the function~$\Lambda$ from Assumption~2.3.2
  in~\cite{DeZe98} equals
  \begin{align}
    \Lambda(\lambda) &= \lim_{n\to\infty}\frac1n \log \mathbb{E}[e^{\lambda n M_n}] 
    =  \lim_{n\to\infty}\frac1n \sum_{k=1}^n \log \mathbb{E}[e^{\lambda X_k}]\label{eq: Lambda 1} \\
    &= \lim_{n\to\infty}\frac1n \sum_{i=1}^p \sum_{\substack{k=1 \\ k\in N_i}}^n \log \varphi_i(\lambda)
    = \sum_{i=1}^p d_i \log \varphi_i(\lambda), \notag
  \end{align}
  which agrees with~\eqref{eq:Lambda}.
  As the~$X_k$ are bounded by Assumption~\ref{ass:main}, the domain
  of~$\Lambda$ is~$\mathbb R.$ By Remark~(c) on p.~45
  of~\cite{DeZe98}, it is thus not necessary to verify the so-called steepness
  of~$\Lambda.$ Since moment generating functions are
  smooth, so is~$\Lambda$. Therefore,
  all assumptions of the G\"artner-Ellis theorem are satisfied.
\end{proof}

\section{Moderate deviations}

When~$x$ in~\eqref{eq:prob} is allowed to depend on~$n$,
and $n^{-1/2} \ll x \ll 1,$ we are in a regime in between of the CLT
and the LD scalings, which is known as moderate deviations regime.
We need the following result from~\cite{Pe54}, which is also
presented in detail as Theorem~1.1 in~\cite{PeRo08}.
\begin{theorem}[Petrov 1954]\label{thm:pet}
   Let~$(X_k)_{k\in\mathbb N}$ be a sequence of independent centered random variables
   such that there are positive numbers $g,G,H$ with
   \begin{equation}\label{eq:pet ineq}
     g\leq \big|\mathbb{E}[e^{h X_k}]\big|\leq G \ \ \text{in the complex circle}\
     |h|<H,\ k\in\mathbb N.
   \end{equation}
   Moreover, suppose that $B_n:=\sum_{k=1}^n \mathbb{E}[X_k^2]$
   satisfies $\liminf B_n/n>0.$ Then, for $1<y=\mathrm{o}(\sqrt{n}),$
   \begin{equation}\label{eq:pet}
     \mathbb{P}\bigg[B_n^{-1/2}\sum_{k=1}^n X_k > y\bigg]
     =\big(1-\Phi(y)\big)
     \exp\bigg(\frac{y^3}{\sqrt{n}} \lambda_n\Big(\frac{y}{\sqrt{n}}\Big)\bigg)
     \big(1+\mathrm{o}(1)\big)
   \end{equation}
   as $n\to\infty,$ where~$\Phi$ is the standard Gaussian cdf, and $\lambda_n$ is 
   a power series which converges uniformly w.r.t.~$n,$ and with coefficients expressible by the cumulants of the~$X_k.$   
\end{theorem}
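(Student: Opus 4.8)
The plan is to run the classical Cram\'er--Petrov argument: exponential tilting (passing to conjugate distributions), followed by a Berry--Esseen bound for the tilted sums. Throughout put $S_n:=\sum_{k=1}^n X_k$ and $\Psi_n(h):=\sum_{k=1}^n\log\mathbb{E}[e^{hX_k}]$. The first observation is that the complex bound~\eqref{eq:pet ineq} carries most of the weight: since $h\mapsto\mathbb{E}[e^{hX_k}]$ is analytic on $\{|h|<H\}$ and, by the lower bound in~\eqref{eq:pet ineq}, has no zero there, its logarithm is single-valued and analytic on $\{|h|<H\}$, with real part in $[\log g,\log G]$, hence bounded on $\{|h|\le H'\}$ for every $H'<H$ uniformly in~$k$; Cauchy's estimates then bound all cumulants $\kappa_{j,k}:=(\log\mathbb{E}[e^{hX_k}])^{(j)}\big|_{h=0}$ uniformly in~$k$, and the same bounds persist after a small tilt. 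This uniformity --- in~$k$, in~$n$, and in the tilting parameter --- is what lets every estimate below be made uniform. Note also $B_n=\sum_k\kappa_{2,k}\asymp n$, using $\liminf B_n/n>0$ together with the uniform upper bound on~$\kappa_{2,k}$.

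For real $h\in(-H',H')$ introduce the conjugate law $d\widetilde{\mathbb{P}}_h/d\mathbb{P}=e^{hS_n-\Psi_n(h)}$, under which the $X_k$ remain independent, $\mathbb{E}_{\widetilde{\mathbb{P}}_h}[S_n]=\Psi_n'(h)$ and $\mathrm{Var}_{\widetilde{\mathbb{P}}_h}[S_n]=\Psi_n''(h)$. Pick $\bar h=\bar h_n>0$ as the unique solution of $\Psi_n'(\bar h)=y\sqrt{B_n}$; since $\Psi_n'(0)=0$, $\Psi_n''(0)=B_n$, $\Psi_n''$ is bounded, and $y\sqrt{B_n}=\mathrm{o}(B_n)$ by $y=\mathrm{o}(\sqrt n)$, this $\bar h$ exists, tends to~$0$, and Lagrange inversion exhibits it as a power series in $t:=y/\sqrt{B_n}$ whose coefficients are polynomials in $\Psi_n^{(j)}(0)/B_n$; the uniform cumulant bounds give this series an $n$-independent radius of convergence. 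A change of measure then gives the exact identity
\[
  \mathbb{P}\big[S_n>y\sqrt{B_n}\big]
  =e^{\Psi_n(\bar h)-\bar h\,y\sqrt{B_n}}\int_0^\infty e^{-\rho u}\,d\widetilde G_n(u),
  \qquad \rho:=\bar h\,\widehat\sigma_n,\quad \widehat\sigma_n^2:=\Psi_n''(\bar h),
\]
where $\widetilde G_n$ is the distribution function, under $\widetilde{\mathbb{P}}_{\bar h}$, of the standardized tilted sum $(S_n-y\sqrt{B_n})/\widehat\sigma_n$ --- a centered, unit-variance sum of $n$ independent summands with uniformly bounded third absolute moments and $\widehat\sigma_n^2\asymp n$.

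By the Berry--Esseen bound for independent (not necessarily equidistributed) summands, $\|\widetilde G_n-\Phi\|_\infty=\mathrm{O}(n^{-1/2})$, the implied constant depending only on $g$, $G$, $H$ and $\liminf B_n/n$. Writing $\widetilde G_n=\Phi+R_n$ and integrating by parts,
\[
  \int_0^\infty e^{-\rho u}\,d\widetilde G_n(u)
  =\int_0^\infty e^{-\rho u}\,d\Phi(u)+\mathrm{O}\big(\|R_n\|_\infty\big)
  =e^{\rho^2/2}\big(1-\Phi(\rho)\big)\big(1+\mathrm{o}(1)\big),
\]
using the exact evaluation $\int_0^\infty e^{-\rho u}\,d\Phi(u)=e^{\rho^2/2}(1-\Phi(\rho))$ and $e^{\rho^2/2}(1-\Phi(\rho))\asymp(1+\rho)^{-1}$ (Mills' ratio), so that the relative error is $\mathrm{O}(\rho/\sqrt n)=\mathrm{O}(y/\sqrt n)=\mathrm{o}(1)$. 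Consequently
\[
  \log\frac{\mathbb{P}[S_n>y\sqrt{B_n}]}{1-\Phi(y)}
  =\underbrace{\Big(\Psi_n(\bar h)-\bar h\,y\sqrt{B_n}+\tfrac12\rho^2\Big)}_{=:E}
   +\log\frac{1-\Phi(\rho)}{1-\Phi(y)}+\mathrm{o}(1).
\]

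It remains to assemble the right-hand side into $\tfrac{y^3}{\sqrt n}\lambda_n(y/\sqrt n)+\mathrm{o}(1)$; this is the main obstacle. Substituting the Lagrange series for~$\bar h$ into $E$ and into $\rho^2=\bar h^2\Psi_n''(\bar h)$, a direct expansion shows that $E$ has vanishing coefficients of $t^0,t^1,t^2$, that its $t^3$-coefficient equals $\tfrac16\Psi_n^{(3)}(0)$, and that $\rho^2-y^2=\mathrm{O}(B_n t^4)$ with no term of order $t^0,\dots,t^3$. By Mills' ratio, $\log\frac{1-\Phi(\rho)}{1-\Phi(y)}=-\tfrac12(\rho^2-y^2)-\log(\rho/y)+\mathrm{O}(y^{-2})$ once $y\to\infty$, and one checks (the linear term cancelling again) that $\rho/y=1+\mathrm{O}(t^2)$, so $-\log(\rho/y)+\mathrm{O}(y^{-2})=\mathrm{O}(y^2/n)+\mathrm{O}(y^{-2})=\mathrm{o}(1)$; when $y$ stays bounded one instead notes $\rho-y\to0$, so the tail ratio tends to~$1$. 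Collecting terms, the part of $E+\log\frac{1-\Phi(\rho)}{1-\Phi(y)}$ that is not $\mathrm{o}(1)$ is a power series in $t=y/\sqrt{B_n}$ beginning at $t^3$, with coefficients of size~$\mathrm{O}(n)$ that are explicit polynomials in the cumulant sums $\Psi_n^{(j)}(0)$; since $t^{j}=y^{j}B_n^{-j/2}$ and $B_n\asymp n$, this equals $\tfrac{y^3}{\sqrt n}\lambda_n(y/\sqrt n)$ for a power series $\lambda_n$ with $\mathrm{O}(1)$ coefficients and an $n$-independent radius of convergence, the factors $\sqrt{n/B_n}$ and the Lagrange composition being absorbed into the (already $n$-dependent) coefficients. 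Exponentiating, this is~\eqref{eq:pet}. The delicate point throughout is \emph{uniformity over the full range $1<y=\mathrm{o}(\sqrt n)$}: because $y^3/\sqrt n$ may stay bounded or diverge, the exponential prefactor cannot simply be Taylor-expanded, so every error term above has to be shown to be $\mathrm{o}(1)$ \emph{after} multiplication by the possibly huge factor $(1-\Phi(y))^{-1}$; this is why the low-order cancellations in the last step and the uniform Berry--Esseen bound in the previous one are both essential.
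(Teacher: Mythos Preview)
The paper does not prove this theorem at all: it is quoted as a known result due to Petrov (1954), with a pointer to the detailed exposition in Petrov--Robinson (2008), and is then used as a black box in the proof of Corollary~4.2. So there is no ``paper's own proof'' to compare your attempt against.

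That said, your sketch is the right one and is essentially Petrov's original argument: exponential tilting at the saddle point $\bar h$ solving $\Psi_n'(\bar h)=y\sqrt{B_n}$, Berry--Esseen for the tilted standardized sum, and then careful bookkeeping of the low-order cancellations to extract the Cram\'er series $\lambda_n$. The key analytic input --- that the uniform two-sided bound~\eqref{eq:pet ineq} makes $\log\mathbb{E}[e^{hX_k}]$ analytic on a fixed disc with derivatives bounded uniformly in~$k$, so that all cumulant sums behave like $\mathrm{O}(n)$ and the Lagrange series for $\bar h$ has an $n$-independent radius --- is identified correctly, as is the delicate point that every remainder must be controlled \emph{relative} to $1-\Phi(y)$ across the whole regime $1<y=\mathrm{o}(\sqrt n)$. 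As a sketch this is fine; turning it into a full proof requires writing out the Lagrange inversion and the cancellation of the $t^0,t^1,t^2$ terms explicitly, and being a bit more careful with the step $\int_0^\infty e^{-\rho u}\,dR_n(u)=\mathrm{O}(\|R_n\|_\infty)$ (integration by parts gives $\mathrm{O}(\rho\,\|R_n\|_\infty)$ plus a boundary term, which is still $\mathrm{O}(y/\sqrt n)=\mathrm{o}(1)$ relative to $e^{\rho^2/2}(1-\Phi(\rho))\asymp(1+\rho)^{-1}$, so the conclusion survives).
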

In~\cite{PeRo08}, it is mentioned that this is a generalization of
Cram\'er's theorem. Indeed, Theorem~1 in~\cite{Cr38} treats
the scaling on the left hand side of~\eqref{eq:pet} (for the iid case), whereas the LD
scaling result
that is nowadays usually called ``Cram\'er's theorem'' is Theorem~6
in~\cite{Cr38}. We now use Petrov's theorem to give a moderate deviations
estimate for~\eqref{eq:prob}.
The first estimate, \eqref{eq:M MD}, directly follows from
Theorem~\ref{thm:pet}, and thus the scaling  involves~$B_n$.
The simpler scaling in~\eqref{eq:M MD bounds} yields
a slightly cruder estimate, in terms of a lower und an upper bound.
If the parameter~$\alpha$ is close to~$\tfrac12,$ the regime
becomes similar to the LD scaling, which would correspond
to $\alpha=\tfrac12$.
\begin{corollary}\label{cor:md}
   Let~$(X_k)_{k\in\mathbb N}$ be a sequence of random variables satisfying
   Assumption~\ref{ass:main}. For $c>0,$ $\alpha\in(0,\tfrac12),$
   and~$B_n=\sum_{k=1}^n \mathbb{E}[X_k^2]$, we have
   \begin{equation}\label{eq:M MD}
     \mathbb{P}\big[M_n> c n^{\alpha-1}B_n^{1/2}\big]
     =\exp\Big({-\tfrac12} c^2 n^{2\alpha}\big(1+\mathrm{o}(1)\big)\Big).
   \end{equation}
   Moreover, with $c_0$ and $c_1$ as in Assumption~\ref{ass:main}, the bounds
  \begin{equation}\label{eq:M MD bounds}
    \mathbb{P}\big[M_n> c c_0 n^{\alpha-1/2}\big]
     \leq\exp\Big({-\tfrac12} c^2 n^{2\alpha}\big(1+\mathrm{o}(1)\big)\Big)
     \leq\mathbb{P}\big[M_n> c c_1^{1/2} n^{\alpha-1/2}\big]
   \end{equation}
   hold.
\end{corollary}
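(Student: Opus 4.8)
The plan is to deduce both displays directly from Petrov's theorem (Theorem~\ref{thm:pet}), so the first task is to check that its hypotheses follow from Assumption~\ref{ass:main}. The lower bound on $B_n$ is immediate from part~(iii), since $B_n = \sum_{k=1}^n \mathbb{E}[X_k^2] \geq c_1 n$, so $\liminf B_n/n \geq c_1 > 0$. For the two-sided bound~\eqref{eq:pet ineq} on the complex moment generating functions, I would argue that $|X_k| \leq c_0$ gives $|\mathbb{E}[e^{hX_k}]| \leq \mathbb{E}[e^{|h| c_0}] = e^{|h|c_0} \leq G := e^{Hc_0}$ on $|h| < H$, and for the lower bound use $|\mathbb{E}[e^{hX_k}]| \geq \mathrm{Re}\,\mathbb{E}[e^{hX_k}] \geq \mathbb{E}[\cos(\mathrm{Im}(h) X_k) e^{\mathrm{Re}(h) X_k}]$; picking $H$ small enough that $|h|c_0$ is well below $\pi/2$ and $e^{-|h|c_0}$ is close to $1$ makes this bounded below by a positive constant $g$ uniformly in $k$. (A cleaner route: $|\mathbb{E}[e^{hX_k}] - 1| \leq \mathbb{E}[|e^{hX_k}-1|] \leq \mathbb{E}[|h||X_k| e^{|h||X_k|}] \leq |h| c_0 e^{|h|c_0}$, which is $<\tfrac12$ once $H$ is small, giving $g = \tfrac12$.) So Petrov's theorem applies to $(X_k)$.

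Next I would substitute into~\eqref{eq:pet}. Note that $M_n > c n^{\alpha-1} B_n^{1/2}$ is exactly the event $B_n^{-1/2}\sum_{k=1}^n X_k > c n^{\alpha}$, so we apply~\eqref{eq:pet} with $y = c n^{\alpha}$; since $\alpha \in (0,\tfrac12)$ this satisfies $1 < y = \mathrm{o}(\sqrt n)$ for $n$ large. The plan is then to estimate each of the three factors on the right of~\eqref{eq:pet} on the log scale. For the Gaussian tail, the standard estimate $1 - \Phi(y) = \exp(-\tfrac12 y^2(1+\mathrm{o}(1)))$ as $y\to\infty$ gives $\log(1-\Phi(c n^\alpha)) = -\tfrac12 c^2 n^{2\alpha}(1+\mathrm{o}(1))$. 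For the exponential correction factor, $y/\sqrt n = c n^{\alpha - 1/2} \to 0$, so $\lambda_n(y/\sqrt n)$ stays bounded (uniformly in $n$, by the uniform convergence of the power series), hence $\frac{y^3}{\sqrt n}\lambda_n(y/\sqrt n) = \mathrm{O}(n^{3\alpha - 1/2})$; since $3\alpha - 1/2 < 2\alpha$ iff $\alpha < 1/2$, this term is $\mathrm{o}(n^{2\alpha})$ and gets absorbed into the $(1+\mathrm{o}(1))$. The $1 + \mathrm{o}(1)$ factor contributes $\mathrm{o}(1)$ to the log, again negligible against $n^{2\alpha}$. Collecting, $\log \mathbb{P}[M_n > c n^{\alpha-1}B_n^{1/2}] = -\tfrac12 c^2 n^{2\alpha}(1+\mathrm{o}(1))$, which is~\eqref{eq:M MD}.

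For~\eqref{eq:M MD bounds}, the idea is to trade the random normalization $B_n^{1/2}$ for the deterministic bounds $c_1^{1/2} n^{1/2} \leq B_n^{1/2} \leq c_0 n^{1/2}$ coming from $c_1 \leq \mathbb{E}[X_k^2] \leq c_0^2$ (the upper bound on the variance being a consequence of $|X_k| \leq c_0$). Then $c c_0 n^{\alpha - 1/2} \geq c n^{\alpha - 1} B_n^{1/2} \geq c c_1^{1/2} n^{\alpha-1/2}$, so by monotonicity of $\mathbb{P}[M_n > \cdot]$ the middle quantity $\mathbb{P}[M_n > c n^{\alpha-1}B_n^{1/2}]$, which by~\eqref{eq:M MD} equals $\exp(-\tfrac12 c^2 n^{2\alpha}(1+\mathrm{o}(1)))$, is at least $\mathbb{P}[M_n > c c_0 n^{\alpha-1/2}]$ and at most $\mathbb{P}[M_n > c c_1^{1/2} n^{\alpha-1/2}]$. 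This is precisely the chain~\eqref{eq:M MD bounds}.

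I expect the only genuinely delicate point to be the uniform-in-$n$ control of the power series $\lambda_n$ near the origin, i.e.\ justifying that $\lambda_n(y/\sqrt n)$ is $\mathrm{O}(1)$ when $y/\sqrt n \to 0$; this rests on the uniform convergence asserted in Theorem~\ref{thm:pet}, whose radius of convergence in turn depends on the constants $g, H$ from~\eqref{eq:pet ineq}, so one should make sure those are chosen once and for all from $c_0$ and $c_1$. Everything else — verifying the hypotheses of Petrov's theorem, the Gaussian tail asymptotics, and the deterministic sandwiching of $B_n$ — is routine.
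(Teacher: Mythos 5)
Your proposal is correct and follows essentially the same route as the paper: verify Petrov's hypotheses from the boundedness and variance bounds (the paper's lower bound on the complex mgf is the same cosine argument, with the explicit choice $H=c_0^{-1}$, $g=\tfrac12 e^{-c_0H}$), substitute $y=cn^{\alpha}$, note that the Gaussian tail dominates while the correction term is $\mathrm{O}(n^{3\alpha-1/2})\ll n^{2\alpha}$, and deduce \eqref{eq:M MD bounds} from $c_1 n\leq B_n\leq c_0^2 n$ by monotonicity. (Only a cosmetic remark: $B_n$ is deterministic, not random, but this does not affect your argument.)
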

\begin{proof}  
  Condition~\eqref{eq:pet ineq} is satisfied with $H=c_0^{-1},$
  $g=\tfrac12 e^{-c_0H},$ and $G=e^{c_0H}.$ Indeed, the upper bound is
  clear, and the lower bound follows from
  \[
    \big|\mathbb{E}[e^{h X_k}]\big| \geq 
    \mathbb{E}\big[e^{\mathrm{Re}(h) X_k}\cos(\mathrm{Im}(h) X_k)\big]
  \]
  and
  \[
     \cos(\mathrm{Im}(h) X_k) \geq 1-\tfrac12 \big(\mathrm{Im}(h) X_k\big)^2
\geq 1- \tfrac12(c_0H)^2 = \tfrac12.
  \]
 The condition for~$B_n$ follows from part~(iii) of
 Assumption~\ref{ass:main}.
  We can thus apply Theorem~\ref{thm:pet}, with $y=cn^\alpha.$
   The main contribution
  arises from the factor
  \[
     1-\Phi(y) = \exp\Big({-\tfrac12} c^2 n^{2\alpha}\big(1+\mathrm{o}(1)\big)\Big).
  \]
  Since the convergence of~$\lambda_n$ is uniform, we have
  $\lambda_n(y/\sqrt{n})=\mathrm{O}(1),$ and thus
  \[
      \frac{y^3}{\sqrt{n}} \lambda_n\Big(\frac{y}{\sqrt{n}}\Big)
      = \mathrm{O}(n^{3\alpha-1/2}) \ll n^{2\alpha}.
  \]
  This proves~\eqref{eq:M MD}. For the second assertion, it then suffices
  to note that Assumption~\ref{ass:main} implies
  \[
     c_1 n \leq B_n \leq c_0^2n, \quad n\in\mathbb N. \qedhere
  \]
\end{proof}
Of course, Theorem~\ref{thm:pet} yields further lower order terms in~\eqref{eq:M MD}, if desired.

\appendix

\section{Proof of Theorem~\ref{thm:counter}}\label{se:proof}

   It is obvious that Assumption~\ref{ass:main} is satisfied.
   By Cram\'er's theorem, $M_n^{(1)},M_n^{(2)},$ defined in~\eqref{eq:sec means},
    satisfy LDPs with good rate functions
   \begin{align}\label{eq:rf}
     \begin{split}
     I^{(1)}(x) &=
     \begin{cases}
       \log 2 + \tfrac{x+1}{2} \log \tfrac{x+1}{2}+ \tfrac{1-x}{2}\log \tfrac{1-x}{2}, 
       & x\in[-1,1], \\
       \infty & \text{otherwise},
     \end{cases} \\
   I^{(2)}(x) &=
     \begin{cases}
       \log 2 + \tfrac{x+2}{4} \log \tfrac{x+2}{4}+ \tfrac{2-x}{4}\log \tfrac{2-x}{4}, 
       & x\in[-2,2], \\
       \infty & \text{otherwise},
     \end{cases}
     \end{split}
   \end{align}
   where $0 \log 0:=0$. See Theorem~I.3 and Exercise~I.12 in~\cite{Ho00}. These functions
   are strictly convex on $[-1,1]$ resp.\ $[-2,2].$
     Let $n_k\to\infty$ be a sequence such that $\nu_2(n_k)/n_k\to0.$ 
   %\[
   %  a_k := 3 \vee \sqrt{\frac{n_k}{\nu_2(n_k)}}, \quad k\in\mathbb N.
   %\]
   Since
      \[
     M_n= \frac{\nu_1(n)}{n} M_n^{(1)} + \frac{\nu_2(n)}{n} M_n^{(2)}
   \]
   and
   \[
      \mathbb{P}\big[|M_{n_k}^{(2)}|\geq 3\big] = 0,
   \]
   we have, for $x>0,$
   \begin{align*}
      \mathbb{P}\big[M_{n_k}\geq x] &= \mathbb{P}[M_{n_k}\geq x,\ |M_{n_k}^{(2)}|<3\big] \\
      &= \mathbb{P}\Big[ \frac{\nu_1({n_k})}{{n_k}} M_{n_k}^{(1)} \geq x - \frac{\nu_2({n_k})}{{n_k}} M_{n_k}^{(2)},
      \ |M_{n_k}^{(2)}|<3\Big] \\
      &\leq \mathbb{P}\Big[ \frac{\nu_1({n_k})}{{n_k}} M_{n_k}^{(1)} \geq x - \frac{3\nu_2({n_k})}{{n_k}} \Big].
   \end{align*}
   Similarly, we deduce the lower bound
   \begin{align*}
     \mathbb{P}[M_{n_k}\geq x] &\geq
     \mathbb{P}\Big[ \frac{\nu_1({n_k})}{{n_k}} M_{n_k}^{(1)} \geq x +\frac{3\nu_2({n_k})}{{n_k}}\Big].
   \end{align*}
   Using the LDP for $M_n^{(1)}$ and~$\nu_1(n_k)/n_k\to1,$ we obtain
   \begin{align*}
     -I^{(1)}(x+\delta) &\leq \liminf_{k\to\infty} \frac{1}{n_k} \log\mathbb{P}[M_{n_k}\geq x]\\
     &\leq \limsup_{k\to\infty} \frac{1}{n_k} \log\mathbb{P}[M_{n_k}\geq x]
     \leq -I^{(1)}(x-\delta) 
   \end{align*}
   for any $\delta>0,$ and by taking $\delta \downarrow0$ we conclude
   \begin{equation}\label{eq:nk}
      \lim_{k\to\infty} \frac{1}{n_k} \log\mathbb{P}[M_{n_k}\geq x] = -I^{(1)}(x),
      \quad x>0.
   \end{equation}
   Analogously, by choosing a sequence $m_k\to\infty$ satisfying $\nu_1(m_k)/m_k\to0,$
   we establish
   \[
      \lim_{k\to\infty} \frac{1}{m_k} \log\mathbb{P}[M_{m_k}\geq x] = -I^{(2)}(x),
      \quad x>0.
   \]
    Suppose now that~$M_n$
   satisfies an LDP with good rate function~$I$ and speed~$s(n).$  
   For $x>0$ and $N\in\mathbb N$, define
   \[
     B_N := (x-1/N, x + 1/N).
   \]
   Then, the assumed LDP implies
   \begin{equation}\label{eq:LDP nk}
     \liminf_{k\to\infty}\frac{1}{s(n_k)} \log \mathbb{P}[M_{n_k} \in B_{N+1}]
     \geq -I(B_{N+1}), \quad N\in\mathbb N.
   \end{equation}
   By~\eqref{eq:nk} and the strict convexity of~$I^{(1)},$ we have
   \begin{equation}\label{eq:I1}
     \log \mathbb{P}[M_{n_k} \in B_{N+1}] = -I^{(1)}(B_{N+1}) n_k\big(1+\mathrm{o}(1)\big),
     \quad k\to\infty.
   \end{equation}
   If $x>1$, then $I^{(1)}(B_{N+1}) =\infty$ for large~$N,$ and~\eqref{eq:LDP nk} and~\eqref{eq:I1} imply
   $I(B_{N+1}) =\infty$ for large~$N.$ By lower semi-continuity, for $N\to\infty$ we get
   \begin{equation}\label{eq:I inf}
     I(x) = \infty, \quad x>1.
   \end{equation}
   For $0<x\leq 1,$ $I^{(1)}(B_{N+1})$ is finite, and~\eqref{eq:LDP nk} and~\eqref{eq:I1} imply
   \[
     I^{(1)}(B_{N+1})\limsup_{k\to\infty} \frac{n_k}{s(n_k)} \leq I(B_{N+1}), \quad N\in\mathbb N.
   \]
   Again, by lower semi-continuity, taking $N\to\infty$ yields
   \begin{equation}\label{eq:I1 I 1}
     I^{(1)}(x)\limsup_{k\to\infty} \frac{n_k}{s(n_k)} \leq I(x),\quad
     0<x\leq1.
   \end{equation}
   Analogously, we can use the upper LDP bound
   \[
     \limsup_{k\to\infty}\frac{1}{s(n_k)} \log \mathbb{P}[M_{n_k} \in \mathrm{cl}(B_{N+1})]
     \leq -I(\mathrm{cl}(B_{N+1}))
     \leq -I(B_N), \quad N\in\mathbb N,
   \]
   to prove
   \begin{equation}\label{eq:I1 I}
     I^{(1)}(x)\liminf_{k\to\infty} \frac{n_k}{s(n_k)} \geq I(x),\quad
     0<x\leq1.
   \end{equation}
   Putting~\eqref{eq:I1 I 1} and~\eqref{eq:I1 I} together yields
   \begin{equation}\label{eq:ell1}
     I(x) = I^{(1)}(x) \ell_1, \quad 0<x\leq1,
   \end{equation}
   where
   \[
     \ell_1 : =\lim_{k\to\infty} \frac{n_k}{s(n_k)}
   \]
   exists in $[0,\infty]$ and is independent of~$x$.
   Repeating the same steps with~$m_k$ instead of~$n_k$ shows
   \begin{equation}\label{eq:rep}
     I(x) = I^{(2)}(x) \ell_2 := I^{(2)}(x) \lim_{k\to\infty} \frac{m_k}{s(m_k)}, \quad 0<x\leq2,
   \end{equation}
   and so $I^{(1)}(x) \ell_1=I^{(2)}(x) \ell_2$ for $0<x\leq 1.$
   {}From the expansions
   \begin{align*}
     I^{(1)}(x) &= \tfrac12 x^2 + \tfrac{1}{12}x^4 + \tfrac{1}{30} x^6 + \mathrm{O}(x^8), \\
     I^{(2)}(x) &= \tfrac18 x^2 + \tfrac{1}{192}x^4 + \tfrac{1}{1920}x^6
          + \mathrm{O}(x^8), \quad x\downarrow 0,
   \end{align*}
   we see that this implies $(\ell_1,\ell_2)=(\infty,\infty)$ or $(\ell_1,\ell_2)=(0,0).$
   The latter is impossible, since~\eqref{eq:I inf} and~\eqref{eq:rep} yield
   \[
     \infty= I(\tfrac32) = I^{(2)}(\tfrac32) \ell_2,
   \]
   which requires $\ell_2=\infty,$ as $I^{(2)}(\tfrac32)$ is finite. To finish the proof,
   we must infer a contradiction from $(\ell_1,\ell_2)=(\infty,\infty).$
   Indeed, \eqref{eq:I inf} and~\eqref{eq:ell1} would then imply $I(x)=\infty$ for
   all $x>0,$ and so
   \[
     \mathbb{P}[M_n \geq 1] = 0,\quad n\in\mathbb N.
   \]
   This is wrong, because $\{M_n \geq 1\}$ contains the event
   \[
     \{ X_k=1\ \text{for}\ k\leq n, k\in K_1 \}\, \cap\, \{ X_k=2\ \text{for}\ k\leq n, k\in K_2 \},
   \]
   which has positive probability.

\bibliographystyle{siam}
\bibliography{../gerhold}

\end{document}